\theoremstyle{plain}
\newtheorem{thm}{Theorem}[section]
\newtheorem{prob}[thm]{Problem}
\newtheorem{hyp}[thm]{Hypothesis}
\newtheorem{lem}[thm]{Lemma}
\newtheorem{cor}[thm]{Corollary}
\theoremstyle{definition}
\begin{document}
\title{Weierstrass Sigma Function\\ Coefficients Divisibility Hypothesis}
\author{Elena~Yu.~Bunkova}
\address{Steklov Mathematical Institute of Russian Academy of Sciences, Moscow, Russia}
\email{bunkova@mi.ras.ru}

\thanks{Supported in part by Young Russian Mathematics award and RFBR grant 16-51-55017.}

\maketitle

\begin{abstract}
We consider the coefficients in the series expansion at zero of the Weierstrass sigma function
\[
\sigma(z) = z \sum_{i, j \geqslant 0} {a_{i,j} \over (4 i + 6 j + 1)!} \left({g_2 z^4 \over 2}\right)^i \left(2 g_3 z^6\right)^j.
\]
We have $a_{i,j} \in \mathbb{Z}$. We present the divisibility Hypothesis for the integers $a_{i,j}$
\begin{align*}
\nu_2(a_{i,j}) &= \nu_2((4i + 6j + 1)!) - \nu_2(i!) - \nu_2(j!) - 3 i - 4 j, \\
\nu_3(a_{i,j}) &= \nu_3((4i + 6j + 1)!) - \nu_3(i!) - \nu_3(j!) - i - j.
\end{align*}
If this conjecture holds, then $\sigma(z)$ is a Hurwitz series over the ring $\mathbb{Z}[{g_2 \over 2}, 6 g_3]$.
\end{abstract}

\section{Exposition}

An \emph{elliptic function} is a meromorphic function on the complex torus $T = \mathbb{C}/\Gamma$,
where $\Gamma \subset \mathbb{C}$ is a lattice of rank $2$.
That is, an elliptic function is a meromorphic function in $\mathbb{C}$ with the periodicity property
\[
 f(z + \omega) = f(z) \quad \text{for} \quad \omega \in \Gamma.
\]

The \emph{Weierstrass sigma function} $\sigma(z) = \sigma(z; g_2, g_3)$
is an entire function in $\mathbb{C}$ determined by the system of equations
$Q_0 \sigma(z) = 0$, $Q_2 \sigma(z) = 0$, where
\begin{equation} \label{e1}
Q_0 = 4 g_2 {\partial \over \partial g_2} + 6 g_3 {\partial \over \partial g_3} - z {\partial \over \partial z} + 1, \quad
Q_2 = 6 g_3 {\partial \over \partial g_2} + {1 \over 3} g_2^2 {\partial \over \partial g_3}
- {1 \over 2} {\partial^2 \over \partial z^2} - {1 \over 24} g_2 z^2,
\end{equation}
and initial conditions $\sigma(0) = 0$, $\sigma'(0) = 1$. See \cite{Wei}.

The \emph{Weierstrass functions} $\zeta(z)$ and $\wp(z)$ are determined by the expressions
\[
\zeta(z)=\frac{\partial \ln \sigma(z)}{\partial z} \; \text{ and }\; \wp(z)=-\frac{\partial \zeta(z)}{\partial z}\,.
\]

Set $\Delta=g_2^3-27g_3^2$.
For $\Delta \ne 0$ the function $\wp(z)$ is elliptic. Denote it's lattice of periods by $\Gamma$.
Any elliptic function with periods $\Gamma$ is a rational function in
$\wp(z)$ and $\frac{\partial}{\partial z}\wp(z)$.

The functions $\wp(z)$ and $\wp'(z)$ satisfy the Weierstrass equation
\begin{equation} \label{curve}
 \wp'(z)^2 = 4 \wp(z)^3 - g_2 \wp(z) - g_3.
\end{equation}

\section{Weierstrass recursion}

Consider $\sigma(z)$ as a series expansion at $0$. Then $\sigma(z) \in \mathbb{Q}[g_2, g_3][[z]]$. From \eqref{e1} we have
\[
\sigma(z)=  z - {g_2 \over 2} {z^5 \over 5!} - 6 g_3 {z^7 \over 7!} -
9 {g_2^2 \over 4} {z^9 \over 9!} - 18 g_2 g_3 {z^{11} \over {11}!} + \ldots.
\]
Set
\[
\sigma(z) = z \sum_{i, j \geqslant 0} {a_{i,j} \over (4 i + 6 j + 1)!} \left({g_2 z^4 \over 2}\right)^i \left(2 g_3 z^6\right)^j, \quad a_{i,j} \in \mathbb{Q}.
\]

\begin{thm}[Weierstrass recursion, see \cite{Wei}] \label{WT} 
We have
\begin{equation} \label{Weireq}
a_{i,j} = 3 (i+1) a_{i+1,j-1} + {16 \over 3} (j+1) a_{i-2,j+1} - {1 \over 3} (4 i + 6 j -1) (2 i + 3 j - 1) a_{i-1,j}
\end{equation}
for $i \geqslant 0$, $j \geqslant 0$, $(i,j) \neq (0,0)$, and
\[
a_{0,0} = 1; \quad a_{i,j} = 0 \; \text{ for } \; i < 0 \text{ or } j < 0.
\]
\end{thm}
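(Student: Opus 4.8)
The plan is to exploit the fact that the two defining operators play complementary roles. First I would observe that $Q_0 \sigma = 0$ is responsible only for the \emph{shape} of the expansion: assigning the weights $\deg g_2 = 4$, $\deg g_3 = 6$, $\deg z = -1$, the operator $Q_0$ acts on a monomial $g_2^i g_3^j z^k$ by multiplication by $4i + 6j - k + 1$, so $Q_0 \sigma = 0$ forces $k = 4i + 6j + 1$ in every term. This is exactly the exponent appearing in the ansatz, which shows the representation $\sigma = z \sum_{i,j \geqslant 0} a_{i,j}\,(4i+6j+1)!^{-1}\,(g_2 z^4/2)^i (2 g_3 z^6)^j$ is well posed, and that the content of the theorem lies entirely in imposing the second equation $Q_2 \sigma = 0$. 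To reduce clutter I would rewrite the series as $\sigma = \sum_{i,j \geqslant 0} u_{i,j}\, g_2^i g_3^j z^{4i+6j+1}$ with $u_{i,j} = a_{i,j}\, 2^{j-i} / (4i+6j+1)!$, absorbing the normalising constants into $u_{i,j}$.

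Next I would apply $Q_2$ to this series term by term. The key structural observation is that, under the same weighting, $Q_2$ is homogeneous of degree $+2$: each of its four summands $6 g_3 \partial_{g_2}$, $\tfrac13 g_2^2 \partial_{g_3}$, $-\tfrac12 \partial_z^2$, and $-\tfrac{1}{24} g_2 z^2$ raises the weighted degree by exactly $2$. Consequently $Q_2 \sigma$ is weighted-homogeneous of degree $+1$, so every monomial occurring in it has the form $g_2^i g_3^j z^{4i+6j-1}$. The four summands act on $g_2^i g_3^j z^{4i+6j+1}$ via the index shifts $(i,j) \mapsto (i-1, j+1)$, $(i+2, j-1)$, $(i,j)$, and $(i+1, j)$ respectively, the third carrying an extra factor $(4i+6j+1)(4i+6j)$ from the double $z$-derivative. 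Re-indexing each of the four resulting sums so as to read off the coefficient of one fixed monomial $g_2^i g_3^j z^{4i+6j-1}$ is the central bookkeeping step.

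Collecting that coefficient and setting it to zero gives a linear relation among $u_{i+1,j-1}$, $u_{i-2,j+1}$, $u_{i,j}$, and $u_{i-1,j}$. I would then clear the common factor $2^{j-i}/(4i+6j-1)!$; the factorial ratios collapse to polynomials. The double $z$-derivative brings $(4i+6j+1)(4i+6j)$, which cancels two factors of $(4i+6j+1)!$ and leaves $(4i+6j-1)!$; the $g_2 z^2$ term produces $(4i+6j-1)(4i+6j-2) = 2(4i+6j-1)(2i+3j-1)$, which is precisely the source of the factor $(2i+3j-1)$. Tracking the powers of $2$ attached to each of the four shifts, namely $2^{-2}$, $2^{3}$, $2^{0}$, $2^{1}$, and simplifying then yields exactly \eqref{Weireq}. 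Finally, $a_{0,0}=1$ follows from $\sigma'(0)=1$ since the coefficient of $z$ is $a_{0,0}$, and the convention $a_{i,j}=0$ for $i<0$ or $j<0$ merely records that such monomials are absent; the recursion stays valid at the boundary precisely because the shifted terms with negative indices drop out.

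I expect the only genuine obstacle to be the purely computational one of aligning the four families of terms onto a common monomial while keeping the factorial ratios and powers of $2$ exactly consistent. There is no conceptual difficulty once the weighted homogeneity of $Q_2$ is recognised, since that homogeneity is exactly what guarantees the four re-indexed sums can be matched coefficient by coefficient.
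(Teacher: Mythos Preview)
Your proposal is correct and follows precisely the route the paper indicates: the paper's entire argument for Theorem~\ref{WT} is the single sentence ``This theorem follows from \eqref{e1},'' and you have faithfully (and accurately) unpacked what that sentence means, using $Q_0\sigma=0$ to fix the exponent pattern and then reading off \eqref{Weireq} from $Q_2\sigma=0$ after re-indexing the four monomial shifts. The bookkeeping you describe with the powers of~$2$ and the factorial ratios checks out and produces exactly the stated recursion.
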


This theorem follows from \eqref{e1}. Let us note that it determines all the coefficients $a_{i,j}$.

\section{Hurwitz series expansion ring}

We say that $\varphi(z)$ is a Hurwitz series over the ring $R$ and write $\varphi(z) \in H R[[z]]$
if $\varphi(z) \in R \otimes \mathbb{Q}[[z]]$ and 
\[
 \varphi(z) = \sum_{k=0}^\infty \varphi_k {z^k \over k!}, \quad \varphi_k \in R.
\]

\begin{prob} \label{p1}
Find the minimal ring $\mathcal{R}$, such that $\sigma(z) \in H \mathcal{R}[[z]]$.
\end{prob}

We have $\mathbb{Z}[{g_2 \over 2}, 6 g_3] \subset \mathcal{R} \subset \mathbb{Q}[g_2, g_3]$.

\begin{cor}[from \ref{WT}] \label{c1}
$\sigma(z) \in H \mathbb{Z}[{1 \over 3}][{g_2 \over 2}, 2 g_3][[z]]$.
\end{cor}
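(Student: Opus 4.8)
The plan is to reduce the Hurwitz-series statement to an integrality statement for the bare coefficients $a_{i,j}$, and then to establish that integrality by induction using the Weierstrass recursion \eqref{Weireq}. First I would observe that the coefficient of $z^{4i+6j+1}/(4i+6j+1)!$ in $\sigma(z)$ equals $a_{i,j}\left({g_2 \over 2}\right)^i(2g_3)^j$. Collecting the finitely many pairs $(i,j)$ with $4i+6j+1=k$, the $k$-th Hurwitz coefficient $\sigma_k$ is a finite $\mathbb{Z}[{1 \over 3}]$-linear combination of the monomials $\left({g_2 \over 2}\right)^i(2g_3)^j$, each of which lies in $\mathbb{Z}[{g_2 \over 2}, 2g_3]$. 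Hence $\sigma_k \in \mathbb{Z}[{1 \over 3}][{g_2 \over 2}, 2g_3]$ as soon as every $a_{i,j}$ appearing lies in $\mathbb{Z}[{1 \over 3}]$, and it suffices to prove $a_{i,j}\in\mathbb{Z}[{1 \over 3}]$ for all $i,j\geqslant 0$.

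The key point is the choice of quantity to induct on. The recursion \eqref{Weireq} expresses $a_{i,j}$ through $a_{i+1,j-1}$, $a_{i-2,j+1}$, and $a_{i-1,j}$; although the first index \emph{increases} in one term, the weight $w=4i+6j$ strictly decreases in every term on the right-hand side, taking the value $w-2$ for the first two terms (since $4(i+1)+6(j-1)=4(i-2)+6(j+1)=w-2$) and $w-4$ for the third (since $4(i-1)+6j=w-4$). I would therefore argue by strong induction on $w$. The base case is $(i,j)=(0,0)$, where $a_{0,0}=1\in\mathbb{Z}[{1 \over 3}]$; every pair with a negative index contributes $0$ by the convention of Theorem~\ref{WT}, which is trivially integral.

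For the inductive step, I note that each of the three coefficients in \eqref{Weireq} lies in $\mathbb{Z}[{1 \over 3}]$: the factor $3(i+1)$ is an integer, while ${16 \over 3}(j+1)$ and ${1 \over 3}(4i+6j-1)(2i+3j-1)$ are integers divided by $3$. By the induction hypothesis the three coefficients $a_{i+1,j-1}$, $a_{i-2,j+1}$, $a_{i-1,j}$ all lie in $\mathbb{Z}[{1 \over 3}]$, so the right-hand side of \eqref{Weireq}, being a $\mathbb{Z}[{1 \over 3}]$-linear combination of elements of $\mathbb{Z}[{1 \over 3}]$, lies in $\mathbb{Z}[{1 \over 3}]$. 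This closes the induction.

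This is a direct corollary, so I do not expect a serious obstacle. The only subtle point is recognizing that a naive induction on $i$ or $j$ alone fails, precisely because the term $3(i+1)a_{i+1,j-1}$ raises the first index; the resolution is to induct on the weight $4i+6j$, which is manifestly decreasing across the recursion. The appearance of ${1 \over 3}$ in the ring is exactly what is forced by the coefficients ${16 \over 3}$ and ${1 \over 3}$ in \eqref{Weireq}, and the recursion alone yields nothing finer — obtaining genuine integrality $a_{i,j}\in\mathbb{Z}$, as asserted in the abstract, would require the separate $2$-adic and $3$-adic valuation estimates of the Hypothesis.
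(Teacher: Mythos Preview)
Your argument is correct and is exactly the natural expansion of what the paper leaves implicit (the corollary is stated without proof, as an immediate consequence of the recursion of Theorem~\ref{WT}); your choice to induct on the weight $4i+6j$, rather than on $i$ or $j$ separately, is precisely the device that makes the recursion well-founded. One small correction to your closing remark: the paper obtains the genuine integrality $a_{i,j}\in\mathbb{Z}$ not from the Hypothesis but by intersecting this $\mathbb{Z}[\tfrac{1}{3}]$-result with the $\mathbb{Z}[\tfrac{1}{2}]$-result of Theorem~\ref{c2}; the Hypothesis concerns the \emph{exact} $2$- and $3$-adic valuations, which is strictly stronger.
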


\begin{thm}[\cite{O}] \label{c2}
 $\sigma(z) \in H \mathbb{Z}[{1 \over 2}][g_2, g_3][[z]]$.
\end{thm}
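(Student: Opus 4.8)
The plan is to reduce the Hurwitz statement to a single $3$-adic divisibility and then attack that through an integer-coefficient differential equation for $\sigma$. First I would unwind the definition of $H\mathbb{Z}[\tfrac12][g_2,g_3][[z]]$. The series $\sigma$ has nonzero Taylor coefficients only in degrees $4i+6j+1$, where
\[
\sigma_{4i+6j+1} = a_{i,j}\Big(\tfrac{g_2}{2}\Big)^i(2g_3)^j = a_{i,j}\,2^{\,j-i}g_2^i g_3^j ,
\]
so that $\sigma\in H\mathbb{Z}[\tfrac12][g_2,g_3][[z]]$ is equivalent to $a_{i,j}\in\mathbb{Z}[\tfrac12]$ for all $i,j$, i.e.\ to $\nu_p(a_{i,j})\geqslant 0$ for every odd prime $p$. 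But Corollary~\ref{c1} already gives $a_{i,j}\in\mathbb{Z}[\tfrac13]$, hence $\nu_p(a_{i,j})\geqslant 0$ for every prime $p\neq 3$. Thus the whole theorem is equivalent to the single assertion $\nu_3(a_{i,j})\geqslant 0$, i.e.\ $\sigma\in H\mathbb{Z}_{(3)}[g_2,g_3][[z]]$, and all the content is concentrated at the prime $3$.

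To produce a recursion free of the spurious $3$'s in \eqref{Weireq}, I would pass to a relation with integer coefficients. Writing $\wp=-(\log\sigma)''=(\sigma'^2-\sigma\sigma'')/\sigma^2$, substituting into the Weierstrass equation \eqref{curve} for $(\wp')^2$, and multiplying through by $\sigma^6$, one obtains the bilinear identity
\[
\big(3\sigma\sigma'\sigma''-\sigma^2\sigma'''-2\sigma'^3\big)^2
= 4\big(\sigma'^2-\sigma\sigma''\big)^3 - g_2\big(\sigma'^2-\sigma\sigma''\big)\sigma^4 - g_3\,\sigma^6 ,
\]
whose coefficients all lie in $\mathbb{Z}[g_2,g_3]$. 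Matching the coefficient of $z^{2N}$ on both sides expresses the top Taylor coefficient $t_N=[z^{2N+1}]\sigma$ through lower coefficients and $g_2,g_3$ with integer coefficients, the only new division being by $C_N = 8N(2N-1)(2N+1)$. Since $8=2^3$ is a unit in $\mathbb{Z}_{(3)}$, and since no prime $\geqslant 5$ dividing $N(2N-1)(2N+1)$ can actually survive in any $a_{i,j}$ (by Corollary~\ref{c1}), this recursion again localizes the entire difficulty at $3$.

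The main obstacle is exactly the factor $3$ in $C_N$. Among $2N-1,2N,2N+1$ one is always divisible by $3$, and if that one is $2N$ then $3\mid N$; hence $3\mid N(2N-1)(2N+1)$ for every $N$, so the recursion divides by $3$ at every order. Proving $\nu_3(a_{i,j})\geqslant 0$ therefore demands that the right-hand side of the recursion always carry a compensating factor of $3$ — a systematic $3$-adic cancellation equivalent to the lower-bound half of the $\nu_3$-formula of the Hypothesis. I would attack this $3$-adically: over $\mathbb{Z}_{(3)}[g_2,g_3]$ the substitution $(x,y)=(\xi,2\eta)$ turns \eqref{curve} into the smooth model $\eta^2=\xi^3-\tfrac{g_2}{4}\xi-\tfrac{g_3}{4}$ (legitimate since $2$ is invertible at $3$), whose formal group law is defined over $\mathbb{Z}_{(3)}[g_2,g_3]$, and the Honda functional equation for $\log_F$ controls the $3$-adic denominators of the curve data. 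The genuinely hard point — the reason $\wp$ itself fails to be $3$-integral while $\sigma$ is — is that $\sigma$ is recovered from this integral data only through a double integration and an exponentiation, and tracking $3$-adic valuations through that transcendental passage is the crux. This is precisely the cancellation supplied by the argument of \cite{O}; a self-contained alternative would be to carry out the congruence induction forced by the bilinear recursion above.
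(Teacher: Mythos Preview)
Your reduction is correct: Corollary~\ref{c1} gives $a_{i,j}\in\mathbb{Z}[\tfrac13]$, so the theorem is equivalent to $\nu_3(a_{i,j})\geqslant 0$ for all $i,j$. But the proposal never establishes this. Your own computation shows $3\mid N(2N-1)(2N+1)$ for every $N$, so the bilinear recursion divides by $3$ at every step and plain induction cannot close. What follows --- the formal group over $\mathbb{Z}_{(3)}[g_2,g_3]$, the Honda functional equation, ``the cancellation supplied by the argument of \cite{O}'', the unperformed ``congruence induction'' --- are suggestions, not arguments: you never state which result of \cite{O} is being invoked or how it plugs into your recursion, and you explicitly flag the passage from integral formal-group data back to $\sigma$ through double integration and exponentiation as ``the crux'' without resolving it. (A side remark: $\nu_3(a_{i,j})\geqslant 0$ is strictly weaker than the lower-bound half of Hypothesis~\ref{hyppo}, not equivalent to it.)

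The paper's proof, by contrast, is a two-line black-box specialization: \cite{O} proves $\sigma\in H\mathbb{Z}[\tfrac{\mu_1}{2},\mu_2,\mu_3,\mu_4,\mu_6][[z]]$ for the general Weierstrass model $y^2+(\mu_1x+\mu_3)y=x^3+\mu_2x^2+\mu_4x+\mu_6$; setting $\mu_1=\mu_2=\mu_3=0$, $\mu_4=-g_2/4$, $\mu_6=-g_3/4$ recovers \eqref{curve}, whence $\sigma\in H\mathbb{Z}[g_2/4,g_3/4][[z]]\subset H\mathbb{Z}[\tfrac12][g_2,g_3][[z]]$. Your substitution $(x,y)=(\xi,2\eta)$ produces exactly this model, so you were one sentence away from the same argument; as written, however, the proposal stops short of a proof.
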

\begin{proof}
The article \cite{O} states that $\sigma(z) \in H \mathbb{Z}[{\mu_1 \over 2}, \mu_2, \mu_3, \mu_4, \mu_6][[z]]$
for the sigma-function of the general Weierstrass model of the elliptic curve
$y^2 + (\mu_1 x + \mu_3) y = x^3 + \mu_2 x^2 + \mu_4 x +\mu_6$.
Set $\mu_1 = \mu_2 = \mu_3 = 0$, $\mu_4 = - {g_2 \over 4}$, $\mu_6 = - {g_3 \over 4}$. 
We get the curve \eqref{curve}. Thus
$\sigma(z) \in H \mathbb{Z}[{g_2 \over 4},  {g_3 \over 4}][[z]]$ and therefore $\sigma(z) \in H \mathbb{Z}[{1 \over 2}][g_2, g_3][[z]]$.
\end{proof}

\begin{cor}[from \ref{c1}, \ref{c2}]
$\sigma(z) \in H \mathbb{Z}[{g_2 \over 2}, 2 g_3][[z]]$.
\end{cor}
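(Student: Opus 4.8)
The plan is to deduce the statement from the two preceding results by a prime-by-prime integrality argument, reducing everything to the arithmetic of the coefficients $a_{i,j}$. First I would observe that in the expansion of $\sigma(z)$ the Hurwitz coefficient at $z^k$ equals
\[
\varphi_k = k!\,[z^k]\sigma(z) = \sum_{4i+6j+1=k} a_{i,j}\left({g_2\over 2}\right)^i (2 g_3)^j,
\]
because the factor $(4i+6j+1)!$ in the denominator of each term is precisely $k!$. Since the monomials $({g_2/2})^i(2g_3)^j$ are $\mathbb{Q}$-linearly independent in $\mathbb{Q}[g_2,g_3]$, membership $\sigma(z)\in H\,\mathbb{Z}[{g_2\over2},2g_3][[z]]$ is equivalent to the single arithmetic statement that $a_{i,j}\in\mathbb{Z}$ for all $i,j\geqslant 0$.

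Next I would read off from the two quoted results what each says about the denominators of $a_{i,j}$. Corollary \ref{c1} states $\sigma(z)\in H\,\mathbb{Z}[{1\over3}][{g_2\over2},2g_3][[z]]$; comparing coefficients of the same independent monomials gives $a_{i,j}\in\mathbb{Z}[{1\over3}]$, i.e. the denominator of $a_{i,j}$ is a power of $3$. Theorem \ref{c2} is stated in the other basis, $\sigma(z)\in H\,\mathbb{Z}[{1\over2}][g_2,g_3][[z]]$, so here I would rewrite
\[
\varphi_k = \sum_{4i+6j+1=k} a_{i,j}\,2^{\,j-i}\,g_2^{\,i} g_3^{\,j}
\]
and compare coefficients of the monomials $g_2^i g_3^j$; this yields $a_{i,j}\,2^{\,j-i}\in\mathbb{Z}[{1\over2}]$, and since multiplication by a power of $2$ alters only the $2$-part of a denominator, $a_{i,j}\in\mathbb{Z}[{1\over2}]$, i.e. the denominator of $a_{i,j}$ is a power of $2$.

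Finally I would combine the two local statements: a rational number whose denominator is simultaneously a power of $3$ and a power of $2$ must have denominator $1$, since $\mathbb{Z}[{1\over2}]\cap\mathbb{Z}[{1\over3}]=\mathbb{Z}$ inside $\mathbb{Q}$ (equivalently, $\mathbb{Z}[{g_2\over2},2g_3]=\mathbb{Z}[{1\over3}][{g_2\over2},2g_3]\cap\mathbb{Z}[{1\over2}][g_2,g_3]$). Hence every $a_{i,j}$ is an integer, and by the reduction of the first step $\sigma(z)\in H\,\mathbb{Z}[{g_2\over2},2g_3][[z]]$, as claimed. The only genuinely delicate point is the bookkeeping in the change of basis between the two rings: I must check that passing from $({g_2/2},2g_3)$ to $(g_2,g_3)$ introduces only powers of $2$ into the coefficients, so that Theorem \ref{c2} controls exactly the $2$-adic denominator of $a_{i,j}$ and nothing more. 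Once that is verified, the intersection of the two integrality statements is immediate and the corollary follows.
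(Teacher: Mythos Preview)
Your argument is correct and is precisely the natural way to fill in what the paper leaves implicit: the paper states the corollary with no proof beyond the attribution ``from \ref{c1}, \ref{c2}'', and your reduction to $a_{i,j}\in\mathbb{Z}[\tfrac13]\cap\mathbb{Z}[\tfrac12]=\mathbb{Z}$ via comparison of monomial coefficients in the two bases is exactly the intended reading of that attribution. The change-of-basis bookkeeping you flag as the one delicate point is handled correctly, so there is nothing to add.
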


Therefore in \eqref{Weireq} we have $a_{i,j} \in \mathbb{Z}$.

\section{Divisibility hypothesis}

Let $\nu_p(\cdot)$ denote p-adic valuation. Set 
\[
b_{i,j} = {(4i + 6j + 1)! \over 2^{3 i + 4 j} \, 3^{i + 2 j} \, i! \, j!}. 
\]

\begin{lem}
$b_{i,j} \in \mathbb{Z}$.
\end{lem}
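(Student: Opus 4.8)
The plan is to prove $b_{i,j}\in\mathbb{Z}$ by showing that its $p$-adic valuation is nonnegative for every prime $p$. The main tool will be Legendre's formula in the digit-sum form $\nu_p(n!)=\frac{n-s_p(n)}{p-1}$, where $s_p(n)$ denotes the sum of the base-$p$ digits of $n$; equivalently one may use $\nu_p(n!)=\sum_{k\geqslant 1}\lfloor n/p^k\rfloor$ together with the superadditivity $\lfloor a+b\rfloor\geqslant\lfloor a\rfloor+\lfloor b\rfloor$.

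First I would dispose of the primes $p\geqslant 5$. Such a $p$ does not divide $2^{3i+4j}3^{i+2j}$, so the bound $\nu_p(b_{i,j})\geqslant 0$ reduces to $\nu_p((4i+6j+1)!)\geqslant\nu_p(i!)+\nu_p(j!)$. This is immediate once one notes that $4i+6j+1-i-j=3i+5j+1\geqslant 0$, so that
\[
\frac{(4i+6j+1)!}{i!\,j!}=\binom{4i+6j+1}{i,\;j,\;3i+5j+1}\,(3i+5j+1)!
\]
is a product of a multinomial coefficient and a factorial, hence an integer with nonnegative valuation at every prime.

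The substance of the lemma therefore lies in $p=2$ and $p=3$, where the extra denominator factors must be absorbed, and this is the step I expect to require the most care. Applying the digit-sum formula to each factorial, $\nu_2(b_{i,j})\geqslant 0$ becomes
\[
j+1+s_2(i)+s_2(j)\;\geqslant\; s_2(4i+6j+1),
\]
and I would bound the right-hand side using two elementary properties of digit sums: subadditivity under addition, $s_p(a+b)\leqslant s_p(a)+s_p(b)$, and invariance under multiplication by $p$, i.e. $s_p(pn)=s_p(n)$. These give $s_2(4i+6j+1)\leqslant s_2(i)+2s_2(j)+1$, using $s_2(4i)=s_2(i)$ and $s_2(6j)=s_2(3j)\leqslant 2s_2(j)$, whereupon the claim collapses to the trivial inequality $j\geqslant s_2(j)$.

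The prime $p=3$ is handled identically: the target becomes
\[
i+j+1+s_3(i)+s_3(j)\;\geqslant\; s_3(4i+6j+1),
\]
and the same two properties yield $s_3(4i+6j+1)\leqslant 2s_3(i)+2s_3(j)+1$, via $s_3(6j)=s_3(2j)\leqslant 2s_3(j)$ and $s_3(4i)\leqslant 2s_3(i)$, reducing the claim to $i+j\geqslant s_3(i)+s_3(j)$, which holds since $s_p(n)\leqslant n$. I anticipate no genuine obstruction: the only delicate point is the bookkeeping in the $p=2,3$ cases, namely choosing the digit-sum bounds tightly enough that the available slack exactly matches the exponents $3i+4j$ and $i+2j$, after which each case terminates in an inequality of the form $n\geqslant s_p(n)$.
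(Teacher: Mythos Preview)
Your proof is correct and follows essentially the same route as the paper: the multinomial coefficient handles primes $p>3$, and Legendre's formula together with an additivity bound handles $p=2,3$. The only cosmetic difference is that you work with the digit-sum form $\nu_p(n!)=(n-s_p(n))/(p-1)$ and the subadditivity $s_p(a+b)\leqslant s_p(a)+s_p(b)$, whereas the paper uses the floor-sum form $\nu_p(n!)=\sum_{k\geqslant 1}\lfloor n/p^k\rfloor$ and the superadditivity $\lfloor a+b\rfloor\geqslant\lfloor a\rfloor+\lfloor b\rfloor$; these are equivalent packagings, and your version makes the final reductions to $j\geqslant s_2(j)$ and $i+j\geqslant s_3(i)+s_3(j)$ particularly transparent.
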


\begin{proof}
As ${(4i + 6j + 1)! \over i! \, j! \, (3 i + 5 j + 1)!} \in \mathbb{Z}$ we have $\nu_p(b_{i,j}) \geqslant 0$ for prime $p >3$.

By Legendre's formula $\displaystyle\nu _{p}(n!) = \sum_{k=1}^\infty\Big\lfloor\frac{n}{p^k}\Big\rfloor$, so for $p = 2$ we have
\begin{multline*}
\nu _{2}((4i + 6j + 1)!) = \sum_{k=1}^\infty\Big\lfloor\frac{4i + 6j + 1}{2^k}\Big\rfloor
= 2 i + 3 j + \sum_{k=1}^\infty\Big\lfloor\frac{2i + 3j}{2^k}\Big\rfloor \geqslant \\ \geqslant
2 i + 3 j + i + j + \sum_{k=1}^\infty\Big\lfloor\frac{i}{2^k}\Big\rfloor + \sum_{k=1}^\infty\Big\lfloor\frac{j}{2^k}\Big\rfloor = 
3 i + 4 j + \nu _{2}(i!) + \nu _{2}(j!),
\end{multline*}
and for $p = 3$ we have
\begin{multline*}
\nu _{3}((4i + 6j + 1)!) = \sum_{k=1}^\infty\Big\lfloor\frac{4i + 6j + 1}{3^k}\Big\rfloor = i + 2 j + \Big\lfloor\frac{i + 1}{3}\Big\rfloor +
\sum_{k=1}^\infty\Big\lfloor\frac{i + 2 j + {i+1 \over 3}}{3^k}\Big\rfloor \geqslant \\ \geqslant
i + 2 j + \sum_{k=1}^\infty\Big\lfloor\frac{i}{3^k}\Big\rfloor + \sum_{k=1}^\infty\Big\lfloor\frac{j}{3^k}\Big\rfloor
= i + 2 j + \nu _{3}(i!) + \nu _{3}(j!).
\end{multline*}
\end{proof}

\begin{hyp} \label{hyppo}
$
\nu_2(a_{i,j}) = \nu_2(b_{i,j}), \quad \nu_3(a_{i,j}) = j + \nu_3(b_{i,j}). 
$
\end{hyp}

\begin{cor} If Hypothesis \ref{hyppo} holds,
the ring $\mathbb{Z}[{g_2 \over 2}, 6 g_3]$ solves problem \ref{p1}.
\end{cor}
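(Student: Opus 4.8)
The plan is to prove the two inclusions that together pin down $\mathcal{R}$. The containment $\mathbb{Z}[\frac{g_2}{2},6g_3]\subseteq\mathcal{R}$ is already recorded after Problem~\ref{p1}: examining the low-order coefficients $-\frac{g_2}{2}\frac{z^5}{5!}$ and $-6g_3\frac{z^7}{7!}$ shows that any ring $R$ with $\sigma(z)\in HR[[z]]$ must contain both $\frac{g_2}{2}$ and $6g_3$, hence contains $\mathbb{Z}[\frac{g_2}{2},6g_3]$. Consequently it suffices to establish the reverse inclusion, namely that $\mathbb{Z}[\frac{g_2}{2},6g_3]$ is itself a ring over which $\sigma(z)$ is a Hurwitz series; minimality of $\mathcal{R}$ then forces $\mathcal{R}\subseteq\mathbb{Z}[\frac{g_2}{2},6g_3]$, and equality follows.

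To prove $\sigma(z)\in H\mathbb{Z}[\frac{g_2}{2},6g_3][[z]]$, I would read off the Hurwitz coefficients directly from the defining expansion. Writing $\sigma(z)=\sum_k \varphi_k \frac{z^k}{k!}$ and collecting the power $z^{4i+6j+1}$, each summand contributes the monomial
\[
a_{i,j}\left(\frac{g_2}{2}\right)^i(2g_3)^j,
\]
so $\varphi_k$ is the finite sum of such monomials over all $(i,j)$ with $4i+6j+1=k$. Since a ring is closed under addition, it is enough to check that each individual monomial lies in $\mathbb{Z}[\frac{g_2}{2},6g_3]$. Rewriting $(2g_3)^j=3^{-j}(6g_3)^j$ gives
\[
a_{i,j}\left(\frac{g_2}{2}\right)^i(2g_3)^j=\frac{a_{i,j}}{3^j}\left(\frac{g_2}{2}\right)^i(6g_3)^j,
\]
so the monomial belongs to $\mathbb{Z}[\frac{g_2}{2},6g_3]$ precisely when $\frac{a_{i,j}}{3^j}\in\mathbb{Z}$.

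The required divisibility $3^j\mid a_{i,j}$ is exactly what Hypothesis~\ref{hyppo} supplies. We already know $a_{i,j}\in\mathbb{Z}$ from the Corollary combining Theorems~\ref{c1} and~\ref{c2}, so the only issue is the $3$-adic valuation. The hypothesis gives $\nu_3(a_{i,j})=j+\nu_3(b_{i,j})$, and the Lemma shows $b_{i,j}\in\mathbb{Z}$, whence $\nu_3(b_{i,j})\geq 0$ and therefore $\nu_3(a_{i,j})\geq j$. Combined with $a_{i,j}\in\mathbb{Z}$ this yields $\frac{a_{i,j}}{3^j}\in\mathbb{Z}$, completing the argument.

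It is worth noting that this direction is elementary once the hypothesis is granted: it uses only the weak consequence $\nu_3(a_{i,j})\geq j$, not the full force of the exact equalities, and the $2$-adic part of the hypothesis plays no role here because the generator $\frac{g_2}{2}$ already absorbs every power of $2$ in the denominator. The genuine difficulty is therefore displaced entirely onto Hypothesis~\ref{hyppo} itself --- the sharp determination of $\nu_2(a_{i,j})$ and $\nu_3(a_{i,j})$ via the Weierstrass recursion~\eqref{Weireq} --- which the corollary takes as given and does not attempt to prove.
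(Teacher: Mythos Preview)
Your argument is correct. The paper states this corollary without proof; your derivation --- rewriting $(2g_3)^j = 3^{-j}(6g_3)^j$ and invoking the hypothesis together with the Lemma to obtain $3^j \mid a_{i,j}$, then combining with the already-noted lower inclusion $\mathbb{Z}[\frac{g_2}{2},6g_3]\subseteq\mathcal{R}$ --- is exactly the intended reasoning, and your observation that only the weak consequence $\nu_3(a_{i,j})\geq j$ (not the exact equalities, nor the $2$-adic part of the hypothesis) is needed here is accurate.
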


\section{Numerical calculations}

The coefficients $a_{i,j}$ grow fast. We have, for example, the prime factorisation
\[
 a_{10,10} = - 2^{11} \cdot 3^{20} \cdot 5^2 \cdot 7 \cdot 11 \cdot 19 \cdot 415516114672128127554409484207124689335643.
\]

So far Hypothesis \ref{hyppo} has no proof. Numerical calculations using mathematical software (Maple 2015)
show that Hypothesis \ref{hyppo} holds for all $i \leqslant 100$, $j \leqslant 100$.

The following tables give $\nu_2(a_{i,j})$ for $0 \leqslant i \leqslant 20$, $0 \leqslant j \leqslant 20$ and
$\nu_3(a_{i,j})$, $\nu_3(a_{i,j}) - j$ and $\nu_5(a_{i,j})$ for $0 \leqslant i \leqslant 20$, $0 \leqslant j \leqslant 10$.

Table 4 suggests there is no formula like Hypothesis \ref{hyppo} for $\nu_p(a_{i,j})$ for prime $p>3$.

\text{ }

\text{ }

\begin{table}[h]
\caption{$\nu_2(a_{i,j})$}\label{t2}
\begin{center}
\begin{tabular}{|c|c|c|c|c|c|c|c|c|c|c|c|c|c|c|c|c|c|c|c|c|}
\hline
0 & 0 & 0 & 0 & 0 & 0 & 0 & 0 & 0 & 0 & 0 & 0 & 0 & 0 & 0 & 0 & 0 & 0 & 0 & 0 & 0 \\
\hline
0 & 1 & 0 & 2 & 0 & 1 & 0 & 3 & 0 & 1 & 0 & 2 & 0 & 1 & 0 & 4 & 0 & 1 & 0 & 2 & 0 \\
\hline
1 & 3 & 2 & 3 & 1 & 4 & 3 & 4 & 1 & 3 & 2 & 3 & 1 & 5 & 4 & 5 & 1 & 3 & 2 & 3 & 1 \\
\hline
3 & 3 & 3 & 3 & 4 & 4 & 4 & 4 & 3 & 3 & 3 & 3 & 5 & 5 & 5 & 5 & 3 & 3 & 3 & 3 & 4 \\
\hline
3 & 3 & 5 & 5 & 4 & 4 & 5 & 5 & 3 & 3 & 6 & 6 & 5 & 5 & 6 & 6 & 3 & 3 & 5 & 5 & 4 \\
\hline
3 & 6 & 5 & 6 & 4 & 6 & 5 & 6 & 3 & 7 & 6 & 7 & 5 & 7 & 6 & 7 & 3 & 6 & 5 & 6 & 4 \\
\hline
6 & 7 & 6 & 8 & 6 & 7 & 6 & 10& 7 & 8 & 7 & 9 & 7 & 8 & 7 & 10& 6 & 7 & 6 & 8 & 6 \\
\hline
7 & 7 & 8 & 8 & 7 & 7 & 10& 10& 8 & 8 & 9 & 9 & 8 & 8 & 10& 10& 7 & 7 & 8 & 8 & 7 \\
\hline
7 & 7 & 7 & 7 & 9 & 9 & 9 & 9 & 8 & 8 & 8 & 8 & 9 & 9 & 9 & 9 & 7 & 7 & 7 & 7 & 10\\
\hline
7 & 8 & 7 & 11& 9 & 10& 9 & 11& 8 & 9 & 8 & 11& 9 & 10& 9 & 11& 7 & 8 & 7 & 12& 10\\
\hline
8 & 12& 11& 12& 10& 12& 11& 12& 9 & 12& 11& 12& 10& 12& 11& 12& 8 & 13& 12& 13& 11\\
\hline
12& 12& 12& 12& 12& 12& 12& 12& 12& 12& 12& 12& 12& 12& 12& 12& 13& 13& 13& 13& 13\\
\hline
12& 12& 13& 13& 12& 12& 14& 14& 12& 12& 13& 13& 12& 12& 16& 16& 13& 13& 14& 14& 13\\
\hline
12& 14& 13& 14& 12& 15& 14& 15& 12& 14& 13& 14& 12& 17& 16& 17& 13& 15& 14& 15& 13\\
\hline
14& 15& 14& 17& 15& 16& 15& 17& 14& 15& 14& 19& 17& 18& 17& 19& 15& 16& 15& 18& 16\\
\hline
15& 15& 17& 17& 16& 16& 17& 17& 15& 15& 19& 19& 18& 18& 19& 19& 16& 16& 18& 18& 17\\
\hline
15& 15& 15& 15& 15& 15& 15& 15& 17& 17& 17& 17& 17& 17& 17& 17& 16& 16& 16& 16& 16\\
\hline
15& 16& 15& 17& 15& 16& 15& 20& 17& 18& 17& 19& 17& 18& 17& 20& 16& 17& 16& 18& 16\\
\hline
16& 18& 17& 18& 16& 21& 20& 21& 18& 20& 19& 20& 18& 21& 20& 21& 17& 19& 18& 19& 17\\
\hline
18& 18& 18& 18& 21& 21& 21& 21& 20& 20& 20& 20& 21& 21& 21& 21& 19& 19& 19& 19& 21\\
\hline
18& 18& 22& 22& 21& 21& 22& 22& 20& 20& 22& 22& 21& 21& 22& 22& 19& 19& 22& 22& 21\\
\hline
\end{tabular}
\end{center}
\end{table}

\begin{table}[h]
\caption{$\nu_3(a_{i,j})$}\label{t3}
\begin{center}
\begin{tabular}{|c|c|c|c|c|c|c|c|c|c|c|c|c|c|c|c|c|c|c|c|c|}
\hline
0 & 0 & 2 & 1 & 1 & 3 & 2 & 4 & 5 & 4 & 4 & 6 & 5 & 5 & 8 & 7 & 8 & 9 & 8 & 8 & 12\\
\hline
1 & 2 & 3 & 3 & 3 & 6 & 5 & 5 & 7 & 5 & 6 & 7 & 8 & 8 & 10& 9 & 9 & 11& 9 & 12& 13\\
\hline
3 & 3 & 5 & 4 & 6 & 7 & 7 & 7 & 9 & 7 & 7 & 10& 9 & 10& 11& 11& 11& 15& 13& 13& 15\\
\hline
4 & 4 & 7 & 6 & 6 & 8 & 7 & 8 & 9 & 9 & 9 & 11& 10& 10& 12& 11& 14& 15& 14& 14& 16\\
\hline
5 & 7 & 8 & 8 & 8 & 10& 9 & 9 & 12& 10& 11& 12& 12& 12& 16& 15& 15& 17& 15& 16& 17\\
\hline
8 & 8 & 10& 9 & 10& 11& 12& 12& 14& 12& 12& 14& 13& 16& 17& 17& 17& 19& 17& 17& 20\\
\hline
9 & 9 & 11& 10& 10& 13& 12& 13& 14& 13& 13& 17& 16& 16& 18& 17& 18& 19& 19& 19& 21\\
\hline
10& 11& 12& 13& 13& 15& 14& 14& 16& 14& 17& 18& 18& 18& 20& 19& 19& 22& 20& 21& 22\\
\hline
12& 12& 15& 14& 15& 16& 16& 16& 20& 18& 18& 20& 19& 20& 21& 22& 22& 24& 22& 22& 24\\
\hline
13& 13& 15& 14& 14& 16& 15& 18& 19& 18& 18& 20& 19& 19& 22& 21& 22& 23& 22& 22& 25\\
\hline
14& 15& 16& 16& 16& 20& 19& 19& 21& 19& 20& 21& 22& 22& 24& 23& 23& 25& 23& 25& 26\\
\hline
\end{tabular}
\end{center}
\end{table}

\begin{table}[h]
\caption{$\nu_3(a_{i,j}) - j$}\label{t3}
\begin{center}
\begin{tabular}{|c|c|c|c|c|c|c|c|c|c|c|c|c|c|c|c|c|c|c|c|c|}
\hline
\hspace{3pt}0\hspace{3pt} & \hspace{3pt}0\hspace{3pt} & \hspace{3pt}2\hspace{3pt} & \hspace{3pt}1\hspace{3pt} & \hspace{3pt}1\hspace{3pt}
& \hspace{3pt}3\hspace{3pt} & \hspace{3pt}2\hspace{3pt} & \hspace{3pt}4\hspace{3pt} & \,5\, & \,4\, & \,4\, & \,6\, & \,5\, & \,5\, & \,8\, & \,7\, & 8 & 9 & 8 & 8 & 12\\
\hline
0 & 1 & 2 & 2 & 2 & 5 & 4 & 4 & 6 & 4 & 5 & 6 & 7 & 7 & 9 & 8 & 8 & 10& 8 & 11& 12\\
\hline
1 & 1 & 3 & 2 & 4 & 5 & 5 & 5 & 7 & 5 & 5 & 8 & 7 & 8 & 9 & 9 & 9 & 13& 11& 11& 13\\
\hline
1 & 1 & 4 & 3 & 3 & 5 & 4 & 5 & 6 & 6 & 6 & 8 & 7 & 7 & 9 & 8 & 11& 12& 11& 11& 13\\
\hline
1 & 3 & 4 & 4 & 4 & 6 & 5 & 5 & 8 & 6 & 7 & 8 & 8 & 8 & 12& 11& 11& 13& 11& 12& 13\\
\hline
3 & 3 & 5 & 4 & 5 & 6 & 7 & 7 & 9 & 7 & 7 & 9 & 8 & 11& 12& 12& 12& 14& 12& 12& 15\\
\hline
3 & 3 & 5 & 4 & 4 & 7 & 6 & 7 & 8 & 7 & 7 & 11& 10& 10& 12& 11& 12& 13& 13& 13& 15\\
\hline
3 & 4 & 5 & 6 & 6 & 8 & 7 & 7 & 9 & 7 & 10& 11& 11& 11& 13& 12& 12& 15& 13& 14& 15\\
\hline
4 & 4 & 7 & 6 & 7 & 8 & 8 & 8 & 12& 10& 10& 12& 11& 12& 13& 14& 14& 16& 14& 14& 16\\
\hline
4 & 4 & 6 & 5 & 5 & 7 & 6 & 9 & 10& 9 & 9 & 11& 10& 10& 13& 12& 13& 14& 13& 13& 16\\
\hline
4 & 5 & 6 & 6 & 6 & 10& 9 & 9 & 11& 9 & 10& 11& 12& 12& 14& 13& 13& 15& 13& 15& 16\\
\hline
\end{tabular}
\end{center}
\end{table}

\begin{table}[h]
\caption{$\nu_5(a_{i,j})$}\label{t5}
\begin{center}
\begin{tabular}{|c|c|c|c|c|c|c|c|c|c|c|c|c|c|c|c|c|c|c|c|c|}
\hline
\hspace{3pt}0\hspace{3pt} & \hspace{3pt}0\hspace{3pt} & \hspace{3pt}0\hspace{3pt} & \hspace{3pt}0\hspace{3pt}
& \hspace{3pt}0\hspace{3pt} & \hspace{3pt}0\hspace{3pt} & \hspace{3pt}0\hspace{3pt} & \hspace{3pt}0\hspace{3pt}
& \hspace{3pt}0\hspace{3pt} & \hspace{3pt}0\hspace{3pt} & \hspace{3pt}0\hspace{3pt} & \hspace{3pt}0\hspace{3pt}
& \hspace{3pt}0\hspace{3pt} & \hspace{3pt}0\hspace{3pt} & \hspace{3pt}0\hspace{3pt} & \hspace{3pt}0\hspace{3pt}
& \hspace{3pt}0\hspace{3pt} & \hspace{3pt}0\hspace{3pt} & \hspace{3pt}0\hspace{3pt} & \hspace{3pt}0\hspace{3pt} & \hspace{3pt}0\hspace{3pt} \\
\hline
0 & 0 & 0 & 0 & 1 & 0 & 0 & 0 & 0 & 1 & 0 & 0 & 0 & 0 & 1 & 0 & 0 & 0 & 0 & 1 & 0 \\
\hline
0 & 0 & 1 & 1 & 1 & 0 & 0 & 1 & 1 & 1 & 0 & 0 & 1 & 1 & 1 & 0 & 0 & 1 & 1 & 1 & 0 \\
\hline
0 & 2 & 1 & 1 & 1 & 0 & 1 & 1 & 1 & 1 & 0 & 1 & 1 & 1 & 1 & 0 & 1 & 1 & 1 & 1 & 0 \\
\hline
2 & 1 & 1 & 1 & 1 & 2 & 1 & 1 & 1 & 1 & 2 & 1 & 1 & 1 & 1 & 2 & 1 & 1 & 1 & 1 & 2 \\
\hline
1 & 1 & 1 & 1 & 1 & 1 & 1 & 1 & 1 & 1 & 2 & 1 & 1 & 2 & 1 & 1 & 1 & 2 & 1 & 2 & 2\\
\hline
1 & 1 & 1 & 1 & 2 & 1 & 2 & 1 & 1 & 2 & 1 & 1 & 1 & 1 & 3 & 1 & 1 & 2 & 2 & 2 & 3\\
\hline
1 & 1 & 3 & 2 & 3 & 3 & 1 & 2 & 2 & 2 & 1 & 1 & 2 & 3 & 2 & 1 & 2 & 2 & 3 & 3 & 2\\
\hline
1 & 3 & 2 & 2 & 2 & 1 & 2 & 2 & 2 & 3 & 1 & 2 & 5 & 2 & 2 & 2 & 2 & 4 & 3 & 3 & 2\\
\hline
3 & 2 & 2 & 2 & 2 & 3 & 2 & 2 & 3 & 2 & 3 & 3 & 2 & 2 & 2 & 3 & 3 & 3 & 3 & 4 & 4\\
\hline
2 & 2 & 2 & 2 & 3 & 2 & 2 & 2 & 3 & 2 & 2 & 5 & 2 & 2 & 4 & 3 & 3 & 4 & 3 & 3 & 3\\
\hline
\end{tabular}
\end{center}
\end{table}

\text{ }

\text{ }


\begin{thebibliography}{99}

\bibitem{Wei} K.~Weierstrass. \emph{Zur Theorie der elliptischen Funktionen}, Mathematische Werke, Bd. \textbf{2} (1894),
Berlin, Teubner, p.~245--255.

\bibitem{WW}
E.~T.~Whittaker, G.~N.~Watson, 
\emph{A Course of Modern Analysis},
Reprint of 4th (1927) ed., Vol 2. \emph{Transcendental functions},
Cambridge Univ. Press, Cambridge, 1996.

\bibitem{O}
Y.~\^Onishi,
\emph{Hurwitz integrality of power series expansion of the sigma function for a plane curve},
arXiv:1510.03002





\end{thebibliography}
\end{document}